\newtheorem{lemma}{Lemma}
\newtheorem{theorem}[lemma]{Theorem}
\newtheorem{conjecture}[lemma]{Conjecture}
\newtheorem{proposition}[lemma]{Proposition}
\theoremstyle{definition}
\newtheorem{definition}[lemma]{Definition}
\newcommand{\bigmid}{\; \big | \;}
\title{Product-free sets in the free group}
 \author{Miquel Ortega \and Juanjo Ru\'e \and Oriol Serra \thanks{Departament de Matem\`atiques and Institut de Matem\`atiques (IMTech) de la Universitat Polit\`ecnica de Catalunya (UPC), and Centre de Recerca Matem\`atica (CRM), Barcelona, Spain. 
  E-mail: \texttt{miquel.ortega.sanchez-colomer@upc.edu}, \texttt{juan.jose.rue@upc.edu},
  \texttt{oriol.serra@upc.edu.}
 }}
 \date{}
\begin{document}
\maketitle
\begin{abstract}
        We prove that product-free subsets of the free group over a finite alphabet
        have maximum upper density $1/2$ with respect to the natural measure that
        assigns total weight one to each set of irreducible words of a given
        length. This confirms a conjecture of Leader, Letzter, Narayanan and
        Walters. In more general terms, we actually prove that strongly
        $k$-product-free sets have maximum upper density $1/k$ in terms of this
        measure. The bounds are tight.
\end{abstract}

\section{Introduction}
A subset $S$ of a group is said to be \emph{product-free} if there do not exist
$x, y, z \in S$ (not necessarily distinct) such that $z = x\cdot y$. Much has
been studied about product-free subsets of finite groups, particularly so in the
abelian case, where they are usually called \emph{sum-free subsets} (see, for example,
the survey by Tao and Vu \cite{2017.TV}). The study of product--free subsets in
nonabelian groups can be traced back to Babai and S\'os \cite{1985.BS}, see the
survey by Kedlaya \cite{2009.Kedlaya}. Interest on the problem was prompted by
the seminal work of Gowers on quasirandom groups \cite{2008.Gowers}.  

The study of product-free sets in discrete infinite structures is more recent. 
As a first approach to the study of the infinite case, Leader, Letzter, Narayanan,
and Walters in 
\cite{2020.LLNW} proved that product-free subsets of the free \emph{semigroup}
on the finite alphabet $\mathcal{A}$ have maximum density $1/2$ with respect to the
measure that assigns a weight of $|\mathcal{A}|^{-n}$ to every word of length
$n$. They conjectured that this is also true for the analogous measure on the
free group. The main purpose of the present paper is to provide a proof of this conjecture. 

More precisely, let us write $\mathcal{F}$ for the free group over a finite
alphabet $\mathcal{A}$. For $n \geq 1$ and $A \subseteq \mathcal{F}$, we write
$A(n) = \left\{ w \in A \colon |w| = n \right\}$, where $|w|$ stands for the
length of the reduced word of $w$, and $A_{\leq n}$ for those that have length
smaller or equal than $n$. We define a measure $\mu$ on $\mathcal{F}$ such
that
\begin{equation}
\label{eq:definition_mu}
\mu(\left\{ w  \right\}) = \frac{1}{|\mathcal{F}(|w|)|} =
\frac{1}{2|\mathcal{A}|(2|\mathcal{A}|-1)^{|w|-1}}
\end{equation}
for all $w \in \mathcal{F}$, so that every layer of words of a given
length has the same total weight. Finally, we write $\bar{d}(A) = \limsup_{n \to
\infty} \frac{\mu(A_{\leq n})}{\mu(\mathcal{F}_{\leq n})}$
for the upper asymptotic density of $A$. Our main result can be phrased as follows.

\begin{theorem}
        \label{theo:free_group}
        Let $S \subseteq \mathcal{F}$ be a product-free subset. Then
        \begin{equation}
                \bar{d}(S) \leq \frac{1}{2}.
        \end{equation}
\end{theorem}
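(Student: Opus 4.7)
I would recast Theorem~\ref{theo:free_group} as a bound on the Ces\`aro average of the layer densities $a_n := \mu(S \cap \mathcal{F}(n))$, since $\bar{d}(S) = \limsup_N (N+1)^{-1} \sum_{n=0}^N a_n$. The key structural tool is the \emph{prefix-suffix decomposition}: for each reduced word $w \in \mathcal{F}(n)$ and every $1 \le m \le n-1$, one writes $w = p_m(w) \cdot s_m(w)$ with $p_m(w) \in \mathcal{F}(m)$ the length-$m$ prefix of $w$ and $s_m(w) = p_m(w)^{-1} w \in \mathcal{F}(n-m)$ the corresponding suffix, crucially with no cancellation (since $w$ is reduced). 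A direct count using $|\mathcal{F}(n)| = 2|\mathcal{A}|(2|\mathcal{A}|-1)^{n-1}$ shows that for uniform $w$, both $p_m(w)$ and $s_m(w)$ are uniform on their respective layers and jointly uniform over compatible pairs; up to the harmless multiplicative factor $2|\mathcal{A}|/(2|\mathcal{A}|-1)$, prefix and suffix behave independently, putting us in position to adapt the semigroup strategy of Leader, Letzter, Narayanan and Walters~\cite{2020.LLNW}.

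In this language the product-free condition reads: if $w \in S$, then for every $1 \le m \le n-1$ at least one of $p_m(w), s_m(w)$ must avoid $S$. Integrating at a single split yields the basic inequality $a_m a_{n-m} \lesssim 1-a_n$, which by itself only gives $\bar{d}(S) \le (\sqrt{5}-1)/2$. Averaging over all splits simultaneously---via the random sets $T(w) = \{m : p_m(w) \in S\}$ and $T'(w) = \{m : s_m(w) \in S\}$, which must be disjoint subsets of $\{0,1,\ldots,n\}$ whenever $w \in S$, and computing $\mathbb{E}[|T|+|T'|]$ in two different ways---sharpens the bound to roughly $\bar{d}(S) \le 2/3$.

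To close the remaining gap to $1/2$, I would apply a second-moment (Paley--Zygmund) argument to the family of bad events $B_m = \{p_m(w) \in S,\ s_m(w) \in S\}$. Since $\bigcup_m B_m \subseteq \mathcal{F}(n) \setminus S$ by product-freeness, a sufficiently sharp lower bound on $\mu(\bigcup_m B_m)$ would force $a_n \le 1/2 + o(1)$. The main obstacle is controlling the correlations between the $B_m$: they all depend on the same random word, and bounding a joint probability $\mu(B_{m_1} \cap B_{m_2})$ requires analysing the three-piece decomposition $w = p_{m_1}(w) \cdot q \cdot s_{m_2}(w)$ together with the \emph{four} simultaneous product-free constraints $p_{m_1}, p_{m_1} q, q s_{m_2}, s_{m_2} \in S$ that this intersection imposes. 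As foreshadowed by the abstract, I would package this inside a proof of the more general statement that strongly $k$-product-free sets have upper density at most $1/k$, so that the underlying combinatorial recursion is made transparent for all $k \ge 2$ and Theorem~\ref{theo:free_group} drops out as the case $k = 2$.
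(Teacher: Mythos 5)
Your proposal heads in a different direction from the paper and, as written, has genuine gaps. The paper never attempts a prefix--suffix/second-moment argument directly in $\mathcal{F}$. Its first and essential move (Section~2) is a reduction via cyclic reduction: it decomposes $S$ into the pieces $S^{w,x,y}$ of words of the form $wx\alpha yw^{-1}$, and shows that the cyclic-reduction map $\pi(wx\alpha yw^{-1}) = x\alpha y$ is an injective, product-preserving map of each piece into the \emph{cancellation-free} subsemigroup $\mathcal{F}^{xy}$. Only after that does a semigroup argument begin (Proposition~\ref{prop:kfree_xy}), and it is not an averaging/Paley--Zygmund argument but a structural one: iteratively build a divisor-free set $W\subseteq S$ of unique products and feed it to Lemma~\ref{lem:unique_implies_bounded}, with the iteration driven for $k\geq 4$ by a density-increment step against the $\varepsilon$-regularity notion of Lemma~\ref{lemma:kprod_pseudorandom}. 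Your proposal omits the reduction step entirely, and this is not merely a stylistic choice.

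The reason the reduction matters shows up already in your ``basic inequality'' $a_m a_{n-m}\lesssim 1-a_n$. What product-freeness gives is $\mu(B_m)\leq 1-a_n$ with $B_m=\{w\in\mathcal{F}(n):p_m(w)\in S,\ s_m(w)\in S\}$, but in the free group $\mu(B_m)$ is \emph{not} bounded below by anything like $a_m a_{n-m}$: the pair $(p_m,s_m)$ ranges only over no-cancellation pairs, and $S(m)$, $S(n-m)$ can be positioned so that no compatible pair lies in $S(m)\times S(n-m)$ (e.g.\ $S(m)$ ending in $a$ and $S(n-m)$ starting with $a^{-1}$), in which case $\mu(B_m)=0$ while $a_m a_{n-m}>0$. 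The factor $M=2|\mathcal{A}|/(2|\mathcal{A}|-1)$ you describe as harmless gives only the upper bound $\mu(B_m)\leq Ma_m a_{n-m}$, which is useless here. This same loss of independence poisons the Paley--Zygmund step: you need lower bounds on $\sum_m\mu(B_m)$ and upper bounds on $\sum_{m_1,m_2}\mu(B_{m_1}\cap B_{m_2})$, and the no-cancellation constraints compound across the three-piece decomposition. You correctly flag this correlation control as the main obstacle, but you do not resolve it, so the proof does not close. The paper's reduction to $\mathcal{F}^{xy}$ removes the obstruction at the source --- in $\mathcal{F}^{xy}$ concatenation is unconditional and the identity $|W(i)\cdot A(\ell-i)|=|W(i)||A(\ell-i)|$ used in the proof of Lemma~\ref{lem:unique_implies_bounded} holds exactly --- and then replaces the second-moment strategy with the unique-products/density-increment machinery, which avoids ever having to control joint events $B_{m_1}\cap B_{m_2}$.
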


We actually study a generalisation of this result. Following the notion by
\L uczak and Schoen \cite{2001.LS} we call a subset $S$ of a semigroup
\emph{$k$-product-free} for $k \geq 2$ if there are no $x_1, \dots, x_k, y
\in S$ such that $x_1 \cdots x_k = y$ and, furthermore, we call $S$
\emph{strongly} $k$-product-free if it is $\ell$-product-free for all $\ell$ with $2
\leq \ell \leq k$. We are able to prove the following.
\begin{theorem}
        \label{theo:kfree}
        Let $S \subseteq \mathcal{F}$ be a strongly $k$-product-free subset for
        $k \geq 2$. Then
        \begin{equation}
                \label{eq:bound_kfree}
                \bar{d}(S) \leq \frac{1}{k}.
        \end{equation}
\end{theorem}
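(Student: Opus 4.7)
The strategy extends the argument of Leader, Letzter, Narayanan and Walters \cite{2020.LLNW} for the free semigroup to the free group and to the strongly $k$-product-free setting; the key new feature is handling cancellation, which turns out to be benign in the right encoding.

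Let $d_n := \mu(S(n)) = |S(n)|/|\mathcal{F}(n)|$, so that $\bar{d}(S) = \limsup_{N \to \infty} (N+1)^{-1} \sum_{n=0}^{N} d_n$. For a uniformly random $w \in \mathcal{F}(N)$, the number of reduced extensions of any fixed $u \in \mathcal{F}(n)$ to length $N$ equals $(2|\mathcal{A}|-1)^{N-n}$ and thus does not depend on $u$; hence the prefix $v_n := w[0, n]$ is uniformly distributed on $\mathcal{F}(n)$. Setting $P(w) := \{n \in [0, N] : v_n \in S\}$ and exchanging sum and expectation,
\[
\sum_{n=0}^{N} d_n \;=\; \mathbb{E}_w |P(w)|,
\]
so it suffices to prove $\mathbb{E}_w |P(w)| \leq (N+1)/k + o(N)$.

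To exploit the hypothesis I would attach to each $w$ the digraph $G_w$ on $[0, N]$ with arcs $\{(i, j) : i < j,\; w[i, j] \in S\}$. The crucial observation is that consecutive substrings of a reduced word concatenate without cancellation, so for any chain $i_0 < i_1 < \cdots < i_\ell$ in $[0, N]$ the identity
\[
w[i_0, i_\ell] \;=\; w[i_0, i_1] \cdot w[i_1, i_2] \cdots w[i_{\ell-1}, i_\ell]
\]
is a reduced factorisation. Hence, if $(i_0, i_1, \ldots, i_\ell)$ is a directed path in $G_w$ and $(i_0, i_\ell)$ is also an arc, the $\ell$ arc-labels (all in $S$) multiply to $w[i_0, i_\ell] \in S$, contradicting $\ell$-product-freeness for every $2 \leq \ell \leq k$. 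Since $P(w)$ is the out-neighbourhood of $0$ in $G_w$, this forces $P(w)$ to contain no two vertices joined by a directed path in $G_w$ of length in $[1, k-1]$.

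The remaining, and expected main, obstacle is to turn this structural constraint into the quantitative bound $\mathbb{E}_w|P(w)| \leq (N+1)/k + o(N)$. The natural approach is to assign to each $n \in P(w)$ a fractional mass $1/k$ distributed over an associated block of $k$ positions (for example, positions reachable from $n$ by short out-paths in $G_w$) and to show that these blocks cover $[0, N+1]$ with bounded overlap, using the forbidden-shortcut pattern above. The subtle point is that the number of arcs of $G_w$ is itself governed by the quantities $d_n$ one is trying to bound, producing a self-referential inequality whose sharpening to the constant $1/k$ is precisely where the full strongly-$k$-product-free hypothesis (and not merely $k$-product-freeness) appears to be essential. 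Once the per-word bound is established, averaging over $w$ and sending $N \to \infty$ yields $\bar{d}(S) \leq 1/k$.
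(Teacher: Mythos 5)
Your encoding is set up correctly: sampling a uniform reduced word $w$ of length $N$, the prefixes $v_n = w[0,n]$ are indeed uniform on $\mathcal{F}(n)$, so $\mathbb{E}|P(w)| = \sum_n d_n$, and the prefix structure neatly sidesteps cancellation since consecutive subwords of a reduced word concatenate reducedly. The derivation of the structural constraint on $P(w)$ via forbidden short paths in $G_w$ is also valid. However, this is where the proposal stops being a proof. The structural constraint alone does not bound $|P(w)|$: if $G_w$ were sparse, $P(w)$ could be almost all of $[0,N]$ while still avoiding the forbidden patterns, so you need to exploit that $G_w$ itself has many arcs on average. You acknowledge this as the ``self-referential inequality,'' but the fractional-covering idea you sketch is exactly the step that requires a new idea, and there is reason to doubt it works as stated: the ``associated blocks'' of out-neighbours need not exist or have bounded overlap unless $S$ has a regularity property that you have not established. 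Thus the core content of the theorem is missing.

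The paper takes a different route that makes this issue tractable. Rather than working with prefixes of a single long word, it reduces to the cancellation-free subsemigroup $\mathcal{F}^{xy}$ via cyclic reduction (Proposition \ref{prop:kfree_xy}), then proves a conditional bound (Lemma \ref{lem:unique_implies_bounded}) of the form $\bar d_\mathcal{H}(S) \le \bigl(1 + \tfrac{\mu(W)}{M} + \cdots + (\tfrac{\mu(W)}{M})^{k-1}\bigr)^{-1}$ whenever one can exhibit a finite $W \subseteq S$ with unique products, and finally constructs such a $W$ with $\mu(W)$ close to $M$. For $k \le 3$ this construction is direct; for general $k$ it requires a density-increment argument to first pass to a restriction $S \cap w\mathcal{G}$ that is $\varepsilon$-regular (Lemma \ref{lemma:kprod_pseudorandom}), after which the iterative construction of $W$ closes. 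This pseudorandomness/density-increment step is precisely the mechanism that resolves the self-referentiality you identify, and nothing analogous appears in your proposal. In short: the observations in your proposal are correct but amount to a reformulation of the hypothesis; the quantitative heart of the argument, which the paper supplies via the unique-products lemma plus density increment, is absent.
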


The upper bound in \eqref{eq:bound_kfree} is best possible. Consider, for
example, an arbitrary letter $x \in \mathcal{A}$, and the set $S
\subseteq \mathcal{F}$ consisting of words such that the number of $x$ minus
the number of $x^{-1}$ in its reduced form is congruent to $1$ modulo $k$. This
set is strongly $k$-product-free and has upper asymptotic density $\bar{d}(S) =
1/k$.

The paper is organised as follows. In Section \ref{sec:reduction} we reduce the
problem to certain subsemigroups of the free group and state the key result,
Proposition \ref{prop:kfree_xy}, which provides the proof of Theorem
\ref{theo:kfree}. Section \ref{sec:semigroup} is devoted to the proof of
Proposition \ref{prop:kfree_xy}. For $2 \leq k \leq 3$, the proof is inspired by the arguments used
in \cite{2020.LLNW} for the case of semigroups, and we give a separate proof for this case for
clarity of exposition. 
The case of general $k$ is somewhat more involved. One must first use a density increment
argument to reach a pseudorandom subset of our given set. There, the general argument is
easier to carry out. Both arguments are presented in a probabilistic manner, which we believe gives
further insight to the ideas behind the proof. The paper concludes with some final
remarks in Section \ref{sec:final}. 

To conclude, we also want to remark that the statement
of Theorem \ref{theo:kfree} also holds
in the free semigroup, which actually was the model where we first
worked out the proof, and it is a generalisation of the main theorem in
\cite{2020.LLNW}.
\begin{theorem}\label{thm:semigroup}
         For any finite alphabet $\mathcal{A}$, a strongly
$k$-product-free of
         the free semigroup over $\mathcal{A}$ has upper asymptotic density at most
$1/k$.
\end{theorem}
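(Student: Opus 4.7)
The plan is to mirror the strategy developed for Theorem \ref{theo:kfree} in the free semigroup setting. Since concatenation in the free semigroup has no cancellation, the reduction of Section \ref{sec:reduction} is not needed: one works directly with $\mathcal{F}$. Writing $\alpha_n := \mu(S(n)) = |S(n)|/|\mathcal{A}|^n$, the target is
\[
\bar{d}(S) \;=\; \limsup_{N \to \infty} \frac{1}{N} \sum_{n=1}^{N} \alpha_n \;\leq\; \frac{1}{k}.
\]

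The probabilistic heart of the argument is a single observation. Fix $\ell \in \{2, \ldots, k\}$ and a composition $n = m_1 + \cdots + m_\ell$ with $m_i \geq 1$. If $x_i$ is chosen uniformly at random in $\mathcal{F}(m_i)$ and the $x_i$'s are independent, then the concatenation $w = x_1 \cdots x_\ell$ is uniform in $\mathcal{F}(n)$. Strong $k$-product-freeness of $S$ makes the events $\{w \in S\}$ and $\{x_1, \ldots, x_\ell \in S\}$ mutually exclusive, yielding the master inequality
\[
\alpha_n + \prod_{i=1}^{\ell} \alpha_{m_i} \;\leq\; 1,
\]
valid for every such $\ell$ and every composition of $n$.

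For the small cases $k = 2, 3$, I would follow the direct style of \cite{2020.LLNW}: a judicious averaging of the master inequality over compositions of $n$, combined with Cauchy--Schwarz and summation over $n \leq N$, should yield a self-referential bound on the Cesàro averages $\frac{1}{N}\sum_n \alpha_n$ that forces $\bar{d}(S) \leq 1/k$. The key point is that naive pointwise application of the master inequality only gives $\alpha + \alpha^k \leq 1$, which is far from $1/k$; the averaging is what brings the Cesàro structure of $\bar{d}(S)$ into play and sharpens the bound.

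For general $k$ the averaging alone is too lossy and a density-increment argument is required as preprocessing. If $\bar{d}(S) > 1/k + \varepsilon$, restrict $S$ to its intersection with $u\mathcal{F}$ for a short word $u$ chosen to maximise the renormalised density, and iterate. The restricted set inherits strong $k$-product-freeness, and after sufficiently many iterations its length densities $\alpha_n$ should become approximately constant across large scales. On such a pseudorandom set, the expectation $\mathbb{E}[\prod_i \alpha_{m_i}]$ is close to $\bar{d}(S)^\ell$, and combining the master inequality across all $\ell \in \{2, \ldots, k\}$ forces $\bar{d}(S) \leq 1/k$, a contradiction. The main obstacle will be the density-increment itself: one must formulate a pseudorandomness condition strong enough to ensure $\mathbb{E}[\prod_i \alpha_{m_i}] \approx \bar{d}(S)^\ell$, show that the iteration terminates before the set becomes trivial, and extract the sharp constant $1/k$ rather than the weaker $((k-1)/k)^{1/k}$ that would come from applying only the $\ell = k$ instance.
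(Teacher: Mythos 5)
Your framework — sampling a random word, observing that the free semigroup has no cancellation so no reduction to $\mathcal{F}^{xy}$ is needed, and using a density-increment to pass to a pseudorandom restriction $S\cap u\mathcal{F}$ — matches the paper's. But the central inequality you build everything on is too weak, and you have (rightly) flagged this yourself without resolving it; the gap is real and is where the paper's actual work lies.

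Your master inequality only encodes the disjointness of \emph{two} events: $\{w\in S\}$ and $\{x_1,\dots,x_\ell\in S\}$. Under pseudorandomness this gives $d + d^\ell \le 1$ for $2\le\ell\le k$, of which the strongest ($\ell=2$) yields $d\le(\sqrt5-1)/2\approx 0.618$ — nowhere near $1/k$, already for $k=2$. Averaging over compositions does not help, since the product structure $\prod_i\alpha_{m_i}$ persists after Cesàro averaging and the bottleneck remains a quadratic (or degree-$\ell$) self-bound. What the paper does instead is exhibit \emph{$k$ pairwise disjoint} sets of measure $\approx d$ each. Concretely, Lemma~\ref{lem:unique_implies_bounded} constructs a finite $W\subseteq S$ with \emph{unique products} (divisor-free, so that $(u,h)\mapsto uh$ is injective from $W\times\mathcal{H}$) and total weight $\mu(W)$ close to $1$ (in the semigroup normalisation; $M$ in the group case). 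Strong $k$-product-freeness plus unique products of $W$ then make the sets $S, W\cdot S, W^2\cdot S,\dots, W^{k-1}\cdot S$ pairwise disjoint — note this is a cancellation argument: $W^{i}\cdot S$ meets $W^{j}\cdot S$ only if $S$ meets $W^{j-i}\cdot S$ — and each has relative measure $\approx\mu(W)^i\,d\approx d$. Packing $k$ such sets into $\mathcal{H}$ gives $d\bigl(1+\mu(W)+\cdots+\mu(W)^{k-1}\bigr)\le 1$, hence $d\le 1/k$ as $\mu(W)\to 1$.

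The remaining technical content, which your sketch also does not address, is the iterative construction of such a $W$: one adds layers $S(\ell)\setminus W_i\cdot\mathcal{G}$ and must show $\mu(W_i)$ grows to $1$. For $k\le 3$ the strong product-freeness alone gives a positive growth rate up to $\mu(W_i)=1$; for $k\ge 4$ the natural recursion stalls at an intermediate fixed point, and $\varepsilon$-regularity (the paper's pseudorandomness) is used to upper-bound $\Pr(\alpha\in S,\alpha\in W_i\cdot w\mathcal{G})$ by $\approx(d+\varepsilon)\mu(W_i)$ so the recursion does reach $\mu(W_i)\approx 1$. Your pseudorandomness is aimed at the wrong quantity: you want $\mathbb{E}[\prod\alpha_{m_i}]\approx d^\ell$ for the master inequality, whereas what is needed is control of the density of $S$ inside the cylinder sets $W_i\cdot w\mathcal{G}$ to keep the $W$-construction running. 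In short: replace the two-event master inequality by the unique-products set $W$ and the $k$-fold disjointness of $W^i\cdot S$; then both the $k\le3$ argument and the general pseudorandom argument go through as in Section~\ref{sec:semigroup} with $M$ replaced by $1$.
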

The straightforward  translation of the arguments in the proof of
Theorem \ref{theo:kfree} to the case of semigroups is discussed in the
final section.

%%%%%%%%%%%%%%%%%%%%%%%%%%%%%%%%%%%%%%%%%%%%%%%%%%%%%%%%%%%

\section{Reduction to a semigroup}\label{sec:reduction}
Throughout the proof, we identify every element of $\mathcal{F}$ with its
reduced word (for example, the length of $w \in \mathcal{F}$ is the length of
its reduced word) and assume that $|\mathcal{A}| \geq 2$. If $|\mathcal{A}| =
1$, the free group is isomorphic to the integers, where the result is
straightforward.
% Let $\mathcal{B} = \mathcal{A} \cup \mathcal{B^{-1}}$.

We write $\mathcal{F}^{xy} \subseteq \mathcal{F}$ for the subset of words
that begin with $x$ and end in $y$ for $x, y \in \mathcal{A} \cup
\mathcal{A}^{-1}$, where $\mathcal{A}^{-1} = \left\{ x^{-1} \colon x \in
\mathcal{A} \right\}$. The first step of the proof consists in reducing the proof
of Theorem
\ref{theo:kfree} to an analogous statement over $\mathcal{F}^{xy}$ with $x \neq
y^{-1}$. This ambient space has the advantage of having no cancellation when
multiplying, so it is much closer to the case of the free semigroup, and we will
then be able to use ideas similar in spirit to those of \cite{2020.LLNW}.

For a given family $\mathcal{H} \subseteq \mathcal{F}$ and a subset $A \subseteq
\mathcal{H}$, we define the \emph{relative upper density} of $A$ in $\mathcal{H}$ as
\[
        \bar{d}_{\mathcal{H}}(A) = \limsup_{n \to
        \infty} \frac{\mu(A_{\leq n})}{\mu(\mathcal{H}_{\leq n})}.
\]
The analogous result to Theorem \ref{theo:kfree} then reads as follows.
\begin{proposition}
        \label{prop:kfree_xy}
        Given $x, y \in \mathcal{A} \cup \mathcal{A}^{-1}$ satisfying $x \neq
        y^{-1}$, let
        $S \subseteq \mathcal{F}^{xy}$ be a strongly $k$-product-free set with
        $k \geq 2$. Then
\[
        \bar{d}_{\mathcal{F}^{xy}}(S) \leq \frac{1}{k}.
\]
\end{proposition}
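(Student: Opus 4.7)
The key structural observation is that $\mathcal{F}^{xy}$ is closed under concatenation without cancellation: since $y\neq x^{-1}$, for any $u,v\in\mathcal{F}^{xy}$ the product $uv$ is already reduced, belongs to $\mathcal{F}^{xy}$, and satisfies $|uv|=|u|+|v|$. Thus $\mathcal{F}^{xy}$ behaves like a free semigroup, and a word $W\in\mathcal{F}^{xy}(n)$ admits a factorization $W=UV$ with $U,V\in\mathcal{F}^{xy}$ exactly at positions $j$ whose letter is $y$ and whose successor is $x$. For $W$ drawn from $\mathcal{F}^{xy}(n)$ under $\mu$, conditioned on a valid split position $j$, the prefix $W[1..j]$ and suffix $W[j+1..n]$ are independent and uniformly distributed over $\mathcal{F}^{xy}(j)$ and $\mathcal{F}^{xy}(n-j)$ respectively.

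Set $\alpha=\bar{d}_{\mathcal{F}^{xy}}(S)$ and write $\alpha_m$ for the density of $S$ in $\mathcal{F}^{xy}(m)$. The plan is to probe $S$ by sampling a random $k$-factorization $W=U_1 U_2\cdots U_k$ of a long random $W$ obtained from $k-1$ random valid split positions; strong $k$-product-freeness then forbids, for every $2\leq\ell\leq k$ and every $\ell$-factorization $W=V_1\cdots V_\ell$ into elements of $\mathcal{F}^{xy}$, the simultaneous membership of $W,V_1,\ldots,V_\ell$ in $S$. Each forbidden configuration yields a linear inequality among the indicators of $S$ at $W$ and its partial products $U_i U_{i+1}\cdots U_j$.

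For $k\in\{2,3\}$, following the spirit of \cite{2020.LLNW}, I would combine the inequalities arising from all nested factorizations of $W$ and the internal sub-factorizations of each partial product, then take expectation over the random word and splits. A Ces\`aro-type average over $n$, together with the fact that $\alpha$ is the $\limsup$ of the cumulative densities, isolates the contribution of $\alpha$ and delivers $\alpha\leq 1/k$.

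For general $k$, a naive combination of these inequalities gives only the weaker bound $\alpha\leq k/(k+1)$, because the indicators $\mathbf{1}_{U_1\cdots U_j\in S}$ can be strongly correlated with the length $|U_1\cdots U_j|$. To reach the sharp $1/k$ I would implement the density-increment strategy hinted in the introduction: if the layer densities of $S$ are too irregular, one iteratively passes to a denser substructure of $S$ (preserving the strong $k$-product-free property) whose layer densities across many levels are nearly constant and still close to $\alpha$. On this pseudorandom substitute the partial products in a random $k$-factorization behave as approximately independent uniform samples, and the strong $k$-product-free inequalities close tightly to $\alpha\leq 1/k$. The main obstacle, in my view, is precisely this pseudorandomization step: designing a notion of ``pseudorandom'' that is preserved under the density increment with controlled loss, and that forces enough decorrelation of the partial-product indicators to saturate the tight constant $1/k$ rather than the crude LP bound.
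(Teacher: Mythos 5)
Your high-level road map (no-cancellation structure of $\mathcal{F}^{xy}$, direct argument for small $k$, density increment to a pseudorandom set for general $k$) matches the paper's architecture, but the proposal has genuine gaps at the two places where the actual mathematics lives, and you acknowledge one of them yourself.

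First, you never isolate the workhorse lemma that makes both cases go through. The paper's key observation is Lemma~\ref{lem:unique_implies_bounded}: if one can exhibit a finite $W\subseteq S$ with \emph{unique products} in the ambient subsemigroup $\mathcal{H}$ (i.e.\ $(w,h)\mapsto wh$ is injective on $W\times\mathcal{H}$), then $W^0 S, W^1 S,\ldots,W^{k-1}S$ are pairwise disjoint in $\mathcal{H}$, and the geometric-series estimate $\Pr(\alpha\in W^i S)\approx(\mu(W)/M)^i\Pr(\alpha\in S)$ forces $\bar d_{\mathcal{H}}(S)\le (1+\mu(W)/M+\cdots+(\mu(W)/M)^{k-1})^{-1}$. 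Everything then reduces to constructing $W$ with $\mu(W)$ close to the critical value $M$. For $k\le 3$ this is done by an explicit greedy iteration building nested divisor-free $W_i\subseteq S$ and tracking the recursion $\mu(W_{i+1})-\mu(W_i)\ge f(\mu(W_i))$ for a concrete polynomial $f$ whose first positive zero is at $M$. Your sketch of ``combine the inequalities from nested factorizations and average'' does not mention this iterative construction or the divisor-free/unique-products property, and it is not clear that it converges to the same bound; it reads closer to what one might try in the free semigroup than to what the paper actually does.

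Second, and more importantly, your notion of pseudorandomness is the wrong one, and you flag this as the main obstacle without resolving it. You propose regularizing so that the \emph{layer} densities $\alpha_m = \mu(S(m))/\mu(\mathcal{F}^{xy}(m))$ become nearly constant across many levels $m$. That condition is far too weak: a set like $u\cdot\mathcal{G}$ has essentially constant layer density but is maximally correlated with divisibility by $u$, and the partial products in a random factorization of such a set are nothing like independent. The paper's $\varepsilon$-regularity (Definition in Section~\ref{sec:semigroup}) is a condition on \emph{cosets}, not layers: $\bar d_{ww'\mathcal{G}}(S\cap ww'\mathcal{G})\le \bar d_{w\mathcal{G}}(S)+\varepsilon$ for every $w'\in\mathcal{G}$. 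This one-sided bound is exactly what is needed to control $\Pr(\alpha\in S,\ \alpha\in W_i\cdot w\mathcal{G})$ in the greedy step, turning the polynomial recursion into a \emph{linear} one whose unique positive zero is at $Md/(d+2\varepsilon)$; and, being monotone and bounded by $1$, it is trivially a valid density-increment quantity (if it fails, pass to $ww'\mathcal{G}$ and gain $\varepsilon$, so the process stops within $\varepsilon^{-1}$ steps). You would need to discover this coset-based regularity notion and the linearization it enables to close the argument; as written, the proposal correctly diagnoses the difficulty but does not supply the missing idea.
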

We will prove this result in the following section. In this section, we deduce
Theorem \ref{theo:kfree} from it.
\begin{proof}[Proof of Theorem \ref{theo:kfree} assuming Proposition
        \ref{prop:kfree_xy}]
        Let $\mathcal{I}$ be the set of tuples $(w, x, y)$, with $w \in
        \mathcal{F}$ and $x, y \in \mathcal{A} \cup \mathcal{A}^{-1}$ such that
        $wxyw^{-1}$ admits no cancellation. For $(w, x, y) \in \mathcal{I}$ write
        $S^{w,x,y} \subseteq S \subseteq \mathcal{F}$ for the elements of $S$ that may be written as
        $w x \alpha y w^{-1}$. Note that, since we imposed that $x \neq y^{-1}$,
        the elements of $S$ belong to a unique $S^{w,x,y}$.

        Let us define the map $\pi \colon S \to \mathcal{F}^{xy}$ that strips
        out $w$ and $w^{-1}$ from the elements of $S^{w, x, y}$, meaning that
        \[
                \pi(wx\alpha y w^{-1}) = x \alpha y
        \]
        for $wx \alpha y w^{-1} \in S^{w, x, y}$.
        In other words, the map $\pi$ sends an element of $S$ to
        its cyclically reduced word.

        Set a fixed $\varepsilon > 0$. Since $S$ is strongly $k$-product-free, so is $S^{w,
        x, y}$, and, because $\pi$ preserves products and is injective when
        restricted to $S^{w, x, y}$, so is $\pi(S^{w, x, y})$. In that case,
        Proposition \ref{prop:kfree_xy} tells us that $\pi(S^{w, x, y})
        \subseteq
        \mathcal{F}^{xy}$ has relative upper density in
        $\mathcal{F}^{xy}$ at most $1/k$ . Hence, for
        each $(w, x, y) \in \mathcal{I}$, there exists $n_0 = n_0(w, x, y)$ such that
        \begin{equation}
                \label{eq:bound_wxy}
                \frac{\mu(\pi(S^{w, x, y})_{\leq n})}{\mu(\mathcal{F}^{xy}_{\leq n})}
                \leq \frac{1}{k} +
                \varepsilon,
        \end{equation}
        for all $n \geq n_0$.

        Let $N_0$ be large enough so that
        \eqref{eq:bound_wxy} holds for all $w, x, y$ with $|w| \leq \ell_0$, with
        $\ell_0$ to be fixed later on. Our goal is to bound
        \begin{equation}
\label{eq:bound_mu_s}
\frac{\mu(S_{\leq n})}{n} = \sum_{(w, x, y) \in \mathcal{I}} \frac{\mu(S^{w, x, y}_{\leq
n})}{n}= \sum_{\substack{(w, x, y) \in \mathcal{I}, \\ |w| \leq \ell_0}} \frac{\mu(S^{w, x,
y}_{\leq n})}{n} + \sum_{\substack{(w, x, y) \in \mathcal{I}, \\ |w| > \ell_0}} \frac{\mu(S^{w, x, y}_{\leq n})}{n}
        \end{equation}
        for large enough $n \geq N_0$. Let us first bound the sum where $|w| >
        \ell_0$. In that case, applying  \eqref{eq:definition_mu}
        we may deduce the crude bound
                % \allowdisplaybreaks
        % \begin{gather*}
                % \sum_{\substack{(w, x, y) \in \mathcal{I},\\ |w| > l_0}} \frac{\mu(S^{w, x, y}_{\leq n})}{n}
                % = 
                % \sum_{\substack{(w, x, y) \in \mathcal{I},\\ |w| > l_0}} \frac{1}{n}
                % \sum_{i > 2 |w|}^n
                % \frac{|S^{w, x, y}(i)|}{|\mathcal{F}(i)|} 
                %         \\
                % \leq \sum_{\substack{(w, x, y) \in \mathcal{I},\\ |w| > l_0}} \frac{1}{n}
                % \sum_{i > 2 |w|}^n
                % \frac{|\mathcal{F}(i-2|w|)|}{|\mathcal{F}(i)|}
                % \leq \sum_{w \in \mathcal{F}, |w| > l_0}(2|\mathcal{A}| -
                % 1)^{-2|w|}
                % \\
                % \leq \sum_{l > l_0} |\mathcal{F}(l)| (2|\mathcal{A}| - 1)^{-2l}
                % \leq 2|\mathcal{A}|
                % \sum_{l > l_0} (2|\mathcal{A}| - 1)^{-l} < \varepsilon,  
        % \end{gather*}
        \begin{align*}
                \sum_{\substack{(w, x, y) \in \mathcal{I},\\ |w| > \ell_0}} \frac{\mu(S^{w, x, y}_{\leq n})}{n}
                &= 
                \sum_{\substack{(w, x, y) \in \mathcal{I},\\ |w| > \ell_0}} \frac{1}{n}
                \sum_{i > 2 |w|}^n
                \frac{|S^{w, x, y}(i)|}{|\mathcal{F}(i)|} 
                        \\
                &\leq \sum_{\substack{(w, x, y) \in \mathcal{I},\\ |w| > \ell_0}} \frac{1}{n}
                \sum_{i > 2 |w|}^n
                \frac{|\mathcal{F}(i-2|w|)|}{|\mathcal{F}(i)|} \\
                & \leq \sum_{w \in \mathcal{F}, |w| > \ell_0}(2|\mathcal{A}| -
                1)^{-2|w|}
                \\
                &\leq \sum_{\ell > \ell_0} |\mathcal{F}(l)| (2|\mathcal{A}| -
                1)^{-2\ell}
                \\
                & \leq 2|\mathcal{A}|
                \sum_{\ell > \ell_0} (2|\mathcal{A}| - 1)^{-l} \leq \varepsilon,  
        \end{align*}
        %TODO revisar gather vs align
        if we take $\ell_0$ large enough in terms of $\varepsilon$. On the other
        hand, note that the first summand of \eqref{eq:bound_mu_s} can be
        rewritten as
        \begin{align*}
                \sum_{\substack{(w, x, y) \in \mathcal{I},\\ |w| \leq \ell_0}}
                \frac{\mu(S^{w, x, y}_{\leq n})}{n} &=
                \sum_{\substack{(w, x, y) \in \mathcal{I}, \\ |w| \leq \ell_0}} \frac{1}{n}
                \sum_{i > 2 |w|}^n
                \frac{|S^{w, x, y}(i)|}{|\mathcal{F}(i)|}
                \\
                &= \sum_{\substack{(w, x, y) \in \mathcal{I}, \\ |w| \leq \ell_0}} \frac{1}{n}
                \sum_{i > 2 |w|}^n
                \frac{|\mathcal{F}(i-2|w|)|}{|\mathcal{F}(i)|}
                \frac{|\pi(S^{w, x, y})(i-2|w|)|}{|\mathcal{F}(i-2|w|)|} \\
                % &= \sum_{\substack{(w, x, y) \in \mathcal{I}, \\ |w| \leq l_0}} \frac{1}{n}
                % \sum_{i > 2 |w|}^n
                % \frac{|\mathcal{F}(i-2|w|)|}{|\mathcal{F}(i)|}
                % \mu(\pi(S^{w, x, y}_{\leq n})(i-2|w|)) \\
                &=  \frac{1}{n}\sum_{\substack{(w, x, y) \in \mathcal{I}, \\ |w|
                \leq \ell_0}}
                \frac{\mu\left(\pi\left(S^{w,x,y}\right)_{\leq
                n-2|w|}\right)}{(2|\mathcal{A}|-1)^{2|w|}}.
        \end{align*}
        Then, using \eqref{eq:bound_wxy}, we obtain the bound
        \begin{align*}
                \sum_{\substack{(w, x, y) \in \mathcal{I},\\ |w| \leq \ell_0}}
                \frac{\mu(S^{w, x, y}_{\leq n})}{n} & \leq
                 \left(\frac{1}{k} +
                 \varepsilon\right) \frac{1}{n}\sum_{\substack{(w, x, y) \in
                 \mathcal{I}, \\ |w| \leq \ell_0}}
                \frac{\mu\left(\mathcal{F}^{xy}_{\leq n-2|w|}\right)}{(2|\mathcal{A}|-1)^{2|w|}}\\
                &\leq \left(\frac{1}{k} +
                \varepsilon\right)\frac{1}{n} \sum_{\substack{(w, x, y) \in
                \mathcal{I}, \\ |w| \leq \ell_0}}
                \mu\left(\mathcal{F}^{w,x,y}_{\leq n}\right) \\
                &\leq \frac{1}{k} + \varepsilon,
        \end{align*}
        for large enough $n$.
        % Using Lemma \ref{lemma:size_xy} we may guarantee that for $n$ large
        % enough $\mu(\mathcal{F}^{xy}_{\leq n}) \leq \frac{1}{4|\mathcal{A}|^2} +
        % \varepsilon$.
        Plugging the bounds for both summands into
        \eqref{eq:bound_mu_s} we obtain that
        \begin{equation*}
                \label{eq:final_bound}
                \bar{d}(S) \leq \frac{1}{k} + 2\varepsilon.
        \end{equation*}
        Since this is true for any $\varepsilon > 0$, we
        conclude by taking $\varepsilon \to 0$.
\end{proof}
% For $n \geq 1$ and $A \subseteq \mathcal{F}$, we write $A(n) = \left\{ w \in A
% \colon |w| = n \right\}$ for the words of $A$ of length $n$, and $A_{\leq n}$ for those
% that have length smaller or equal than $n$. We weigh
% the elements of the free group with $\sigma(w) = 1/|\mathcal{F}(n)|$, so that
% every layer of length $n$ has the same total weight. Our goal is to show that
% any product-free subset $S \subseteq \mathcal{F}$ has upper density at most
% $1/2$, meaning that
% \begin{equation}
%         \label{eq:limsup}
%         \limsup_{n \to \infty} \frac{\sigma(S^{\geq
%         n})}{\sigma(\mathcal{F}_{\leq n})} = \limsup_{n \to \infty}
%         \frac{\sigma(S_{\leq n})}{n} \leq \frac{1}{2}
% \end{equation}

% Note that
% \[
%         \frac{\sigma(S_{\leq n})}{n} = \frac{1}{n} \sum_{i=1}^n
% \sigma(S(i)) =  \frac{1}{n} \sum_{i=1}^n
% \frac{|S(i)|}{|\mathcal{F}(i)|},
% \]
% that is, the density of $S$ in the ball
% $\mathcal{F}_{\leq n}$ is the mean of the densities in every layer.

\section{Proof over a semigroup}\label{sec:semigroup}
Throughout this section, let $x, y \in \mathcal{A} \cup
\mathcal{A}^{-1}$ be fixed letters such that $x \neq y^{-1}$ and write $\mathcal{G} =
\mathcal{F}^{xy} \subseteq \mathcal{F}$ for the subsemigroup of words
beginning in $x$ and ending in $y$, including the empty word. We also write $M =
\frac{2|\mathcal{A}|}{2|\mathcal{A}| - 1}$ for a constant that appears
in several arguments, due to the fact that
$|\mathcal{F}(i)||\mathcal{F}(j)| = \frac{|\mathcal{F}(i+j)|}{M}$.

We first prove a version of Proposition
\ref{prop:kfree_xy} that depends on the construction of certain subsets of $S$
with appropriate properties.
% In order to prove Proposition \ref{prop:kfree_xy}, we will proceed by studying
% the density of $S$ in certain restricted subspaces where it is better behaved.
We say a subset $\mathcal{H} \subseteq \mathcal{F}$ is \emph{dense} if $\mu(\mathcal{H}_{\leq n})
> \delta \mu(\mathcal{F}_{\leq n}) > 0$ for a fixed $\delta$ and all large
enough $n$. Furthermore, it is a \emph{subsemigroup} if $\mathcal{H}\cdot
\mathcal{H} \subseteq \mathcal{H}$, i.e.\ $\alpha \beta \in \mathcal{H}$ for all
$\alpha, \beta \in \mathcal{H}$.
Finally, a subset $W \subseteq \mathcal{H}$ has
\emph{unique products} in $\mathcal{H}$ if the map
\begin{align*}
        (W, \mathcal{H}) &\to \mathcal{H} \\
        (w, h) &\mapsto w \cdot h
\end{align*}
is injective.

For $W$ and $\mathcal{H} \subseteq \mathcal{G}$ satisfying the above
properties, we prove a version of Proposition \ref{prop:kfree_xy} conditional on
$\mu(W)$ being close to $M$.
\begin{lemma}
        \label{lem:unique_implies_bounded}
        Let $\mathcal{H} \subseteq \mathcal{G}$ be  a dense subsemigroup. For any
        strongly $k$-product free set $S \subseteq \mathcal{H}$ and finite
        subset $W \subseteq S$ with unique products in $\mathcal{H}$ it holds that
        \[
                \bar{d}_\mathcal{H}(S) \leq \frac{1}{1 + \frac{\mu(W)}{M} + \dots +
                \left(\frac{\mu(W)}{M}\right)^{k-1}}
        \]
\end{lemma}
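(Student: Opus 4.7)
The plan is to exhibit $k$ pairwise disjoint subsets of $\mathcal{H}$ whose total measure can be lower-bounded in terms of $\mu(S_{\leq n})$. Specifically, I will work with the sets $W^0 S, W^1 S, \ldots, W^{k-1} S$, where $W^i S := \{w_1 \cdots w_i s : w_j \in W, s \in S\} \subseteq \mathcal{H}$; the containment uses that $\mathcal{H}$ is a subsemigroup containing both $W$ and $S$. The hope is that the $i$-th piece contributes roughly $(\mu(W)/M)^i$ times the measure of $S$, giving a geometric sum.

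The first and main step is to verify pairwise disjointness of these $k$ sets. Suppose $w_1 \cdots w_i s = w'_1 \cdots w'_j s'$ with $0 \leq i \leq j \leq k-1$ and $w_p, w'_q \in W$, $s, s' \in S$. Since $W$ has unique products in $\mathcal{H}$ and both sides lie in $\mathcal{H}$, one may peel off factors from the left to conclude $w_p = w'_p$ for $p \leq i$ and $s = w'_{i+1} \cdots w'_j s'$. If $i < j$, the latter identity writes $s \in S$ as a product of $j - i + 1 \in \{2, \ldots, k\}$ elements of $S$ (using $W \subseteq S$), contradicting strong $k$-product-freeness. This is the step that genuinely couples the two hypotheses on $W$ and $S$, and I expect it to be the main conceptual point; the rest is bookkeeping.

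Next comes the measure computation. Since $\mathcal{H} \subseteq \mathcal{F}^{xy}$ with $x \neq y^{-1}$, no cancellation occurs among products inside $\mathcal{H}$, so $|w_1 \cdots w_i s| = |w_1| + \cdots + |w_i| + |s|$. Combined with the unique-products bijection $(w_1, \ldots, w_i, s) \mapsto w_1 \cdots w_i s$ from $W^i \times S$ onto $W^i S$, and the identity $\mu(\{v_1 \cdots v_{i+1}\}) = M^{-i} \prod_p \mu(\{v_p\})$ for non-cancelling products (immediate from the definition of $\mu$ and the relation between the $|\mathcal{F}(n)|$), one gets
\[
\mu\bigl((W^i S)_{\leq n}\bigr) = \frac{1}{M^i} \sum_{(w_1,\ldots,w_i) \in W^i} \mu(\{w_1\}) \cdots \mu(\{w_i\}) \, \mu\bigl(S_{\leq n - |w_1| - \cdots - |w_i|}\bigr) \geq \left(\frac{\mu(W)}{M}\right)^i \mu\bigl(S_{\leq n - (k-1)L}\bigr),
\]
where $L := \max_{w \in W} |w|$ bounds each $|w_p|$ and $\sum_{w \in W} \mu(\{w\}) = \mu(W)$.

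Finally, summing over $i = 0, 1, \ldots, k-1$ and applying the disjointness from the first step,
\[
\mu(\mathcal{H}_{\leq n}) \geq \sum_{i=0}^{k-1} \mu\bigl((W^i S)_{\leq n}\bigr) \geq \mu\bigl(S_{\leq n - (k-1)L}\bigr) \sum_{i=0}^{k-1} \left(\frac{\mu(W)}{M}\right)^i.
\]
Since $\mathcal{H}$ is dense, $\mu(\mathcal{H}_{\leq n})$ grows linearly in $n$, so $\mu(\mathcal{H}_{\leq n})/\mu(\mathcal{H}_{\leq n-(k-1)L}) \to 1$ as $n \to \infty$. Dividing by $\mu(\mathcal{H}_{\leq n-(k-1)L})$ and passing to $\limsup$ then yields the claimed bound on $\bar{d}_{\mathcal{H}}(S)$.
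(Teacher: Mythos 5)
Your proof is correct and follows essentially the same approach as the paper: identify the $k$ pairwise disjoint sets $W^0 S, \ldots, W^{k-1}S \subseteq \mathcal{H}$, whose disjointness comes from combining unique products with strong $k$-product-freeness, and then estimate the measure of $W^i S$ by $(\mu(W)/M)^i$ times the measure of $S$. The paper phrases the measure computation probabilistically (sampling $\alpha \in \mathcal{F}_{\leq n}$ and tracking an iterated $o(1)$ error), while you work directly with $\mu$ and bypass the error bookkeeping by the crude but clean one-sided bound $\mu\bigl(S_{\leq n - (k-1)L}\bigr)$; this is a cosmetic rather than conceptual difference.
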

\begin{proof}
        For a given $n$, consider a random word $\alpha \in \mathcal{F}_{\leq n}$
        chosen in the following manner. We first choose its length $\ell \in [n]$
        with uniform probabi\-lity, and then choose a random word belonging to
        $\mathcal{F}(\ell)$ uniformly, so that $\Pr (\alpha = w) =
        \mu(\{w\})/n$. In other words, we are sampling $\mathcal{F}_{\leq n}$
        according to the measure $\mu$ normalized so that it is a probability
        measure. By additivity, it holds that
        \begin{equation}
                \label{eq:prob_alpha_h}
                \frac{\Pr(\alpha \in S)}{\Pr(\alpha \in \mathcal{H})} =
                \frac{\mu(S_{\leq n})}{\mu(\mathcal{H}_{\leq n})}
        \end{equation}

        Since $S$ is strongly $k$-product-free, if $\alpha$ belongs to $W^i
        \cdot S$ for $1 \leq i \leq k-1$ then $\alpha$ cannot belong to $S$.
        Since $W$ has unique products in $\mathcal{H}$, the set $W^i \cdot S$ intersects $W^j
        \cdot S$ with $i > j$ only when $W^{i-j} \cdot S$ intersects $S$. Hence,
        the sets $W^i \cdot S$ for $0 \leq i \leq k-1$ are disjoint and
        \begin{equation}
                \label{eq:disjoint_sets_kprod}
                \sum_{i=0}^{k-1}\Pr (\alpha \in W^i \cdot S)
                \leq \Pr(\alpha \in \mathcal{H}).
        \end{equation}

        Let us estimate the probability that $\alpha \in W \cdot A$ for an
        arbitrary subset $A \subseteq \mathcal{H}$. Note that
        $\alpha$ factors in a unique way because $W$ has unique products. In
        particular, $W(i) \cdot S$ and $W(j) \cdot S$ are disjoint for $i \neq
        j$. Hence,
        writing $\ell_0$ for the maximum length of a word in
        $W$, we have that
        \[
                \Pr (\alpha \in W \cdot A) = \sum_{i=1}^{\ell_0} \Pr (\alpha \in
                W(i) \cdot A).
        \]
        Conditioning on the length of $\alpha$, we obtain
        \begin{equation}
                \label{eq:product_WA}
        \begin{split}
                \Pr (\alpha \in W \cdot A) &=
                \sum_{i=1}^{\ell_0} \sum_{l=i+1}^n \Pr(|\alpha| = l)
                \Pr \left(\alpha \in W(i) \cdot A \bigmid |\alpha| = \ell\right) \\
                &= \sum_{i=1}^{\ell_0} \sum_{l=i+1}^n \frac{1}{n}
                \Pr \left(\alpha \in W(i) \cdot A(\ell-i) \bigmid |\alpha| =
                \ell\right) \\
                &= \sum_{i=1}^{\ell_0} \sum_{l=i+1}^n \frac{1}{n}
                \mu(W(i))
                \mu(A(\ell-i))
                \frac{|\mathcal{F}(i)| |\mathcal{F}(\ell-i)|}{|\mathcal{F}(\ell)|}\\
                &= \frac{1}{M}\sum_{i=1}^{\ell_0} \mu(W(i))
                \left(\frac{1}{n}\sum_{t=1}^{n-i} \mu(A(t))\right)\\
                &= \frac{\mu(W)}{M} \frac{\mu(A_{\leq n}) + O(1)}{n} \\
                &= \frac{\mu(W) \Pr(\alpha \in A)}{M} + o(1),
        \end{split}
\end{equation}
        where in the third equality we used that there is no cancellation in
        $\mathcal{G}$, so that $|W(i) \cdot A(\ell-i)| = |W(i)| \cdot |A(\ell-i)|$.
        By induction, we have that
        $\Pr(W^i \cdot S) = (\mu(W)/M)^i \Pr(\alpha \in S) + o(1)$. Plugging
        this estimate into
        \eqref{eq:disjoint_sets_kprod} and noting that $\mathcal{H}$ is dense gives
        \[
                \frac{\Pr(\alpha \in S)}{\Pr(\alpha \in \mathcal{H})} \left(1 +
                        \frac{\mu(W)}{M}
        + \cdots + \left(\frac{\mu(W)}{M}\right)^{k-1}\right) + o(1) \leq 1
        \]
        Using \eqref{eq:prob_alpha_h} and taking the upper limit we reach
        the desired conclusion.
\end{proof}

We would now like to find a subset $W \subseteq S$ large enough in order to
apply the previous lemma. We first present a proof for the case $2 \leq k \leq
3$, where we can use a similar argument to the one in \cite{2020.LLNW}. In this
case, we may exploit the fact that $S$ is product-free to construct $W$ in a
straightforward manner. The reader interested only in the general case may
choose to ignore this proof.

% In order to prove our main goal, we first prove a result analogous to
% \eqref{eq:limsup} for $\mathcal{F}^{xy}$ with $x \neq y^{-1}$, which
% We are now ready to prove the statement for $\mathcal{F}^{xy}$ with $x \neq
% y^{-1}$, but let us first make a simple observation that may be useful when
% parsing the proof. Note that
% \[
%         \frac{\sigma(S_{\leq n})}{\sigma(\mathcal{F}(n))} = \frac{\sigma(S_{\leq n})}{n} = \frac{1}{n} \sum_{i=1}^n
% \sigma(S(i)) =  \frac{1}{n} \sum_{i=1}^n
% \frac{|S(i)|}{|\mathcal{F}(i)|},
% \]
% that is, we may think of the density of $S$ in the ball
% $\mathcal{F}_{\leq n}$ as the mean of the densities in every layer. The
% same is true for $\sigma_{xy}$ and $\mathcal{F}^{xy}$.

\begin{proof}[Proof of Proposition \ref{prop:kfree_xy} for $2 \leq k \leq 3$]
        Suppose there exists a $k$-product-free subset $S$ satisfying
        $\bar{d}_{\mathcal{G}}(S) > \frac{1}{k} + \varepsilon$ for $\varepsilon
        > 0$. Our goal is to build a subset $W \subseteq S$ with unique products
        in $\mathcal{G}$ and $\mu(W)$ as large as possible so that we may apply
        Lemma \ref{lem:unique_implies_bounded} with $\mathcal{H} = \mathcal{G}$.
        We will build a nested sequence of divisor-free subsets $W_i \subseteq
        S$, meaning there are no $u, v \in W_i$ and $t \in \mathcal{G}$ such
        that $ut = v$. This condition implies that $W_i$ has unique products in
        $\mathcal{G}$.

        Set $W_1 = S(\ell)$ for
some $\ell$ such that $S(\ell) \neq \varnothing$. Given $W_i \subseteq S$, we will now
        build $W_{i+1}$.
        Choose $\alpha \in
        \mathcal{F}_{\leq n}$ randomly as in the proof of Lemma
        \ref{lem:unique_implies_bounded}. Since $S$ is strongly
        $k$-product-free, it holds that $(W_i^j \cdot S) \cap S = \varnothing$ for
        $1 \leq j \leq k-1$. Since $W_i$ has unique products in $\mathcal{G}$,
        this implies that $(W^{t+j}_i \cdot S) \cap (W_i^t \cdot S) =
        \varnothing$ for $1 \leq j \leq k-1$ and $t \geq 0$.
        Therefore, the subsets $W_i^j \cdot S$ are disjoint for $0 \leq j \leq k-1$ and
        \[
                \sum_{j = 0}^{k-1} \Pr(\alpha \in W_i^j \cdot S) \leq \Pr(\alpha \in W_i \cdot
                \mathcal{G}) + \Pr(\alpha \in S, \alpha \not \in W_i \cdot
                \mathcal{G}).
        \]
        Repeatedly applying the estimate \eqref{eq:product_WA} and reordering
        terms, we obtain that
        \[
                \Pr(\alpha \in S, \alpha \not \in W_i \cdot
                \mathcal{G}) \geq
                \Pr(\alpha \in S) \left( \sum_{j=0}^{k-1} \left(\frac{\mu(W_i)}{M}\right)^j\right) -
                \Pr(\alpha \in \mathcal{G})\frac{\mu(W_i)}{M} + o(1)
        \]
        Since we assumed that $S$ has relative upper density greater than
        $1/k + \varepsilon$, we deduce that
        \[
                \Pr(\alpha \in S, \alpha \not \in W_i \cdot
                \mathcal{G}) \geq
                \Pr(\alpha \in G) \left( \frac{1}{k} \left(\sum_{j=0}^{k-1}
                \left(\frac{\mu(W_i)}{M}\right)^j\right) -
        \frac{\mu(W_i)}{M}\right)
        \]
        for large enough $n$. Since it holds that $\Pr(\alpha \in A) =
        \frac{1}{n}\sum_{i=1}^n \mu(A(i))$, we may find $\ell \in \mathbb{N}$ such
        that
        \[
                \mu(S(\ell) \setminus W_i \cdot \mathcal{G}) \geq \Pr(\alpha
                \in \mathcal{G})\left( \frac{1}{k}
        \left( \sum_{j=0}^{k-1} \left(\frac{\mu(W_i)}{M}\right)^j\right)
- \frac{\mu(W_i)}{M}\right).
        \]
        Set $W_{i+1} = W_i \sqcup (S(\ell) \setminus W_i \cdot \mathcal{G})$, and
        note that is still divisor-free and $W_{i+1} \subseteq S$.

        With this procedure, we obtain a sequence of subsets satisfying
        \[
                \mu(W_{i+1}) - \mu(W_i) \geq \Pr(\alpha \in \mathcal{G})\left(\frac{1}{k} \left(\sum_{j=0}^{k-1} \left(\frac{\mu(W_i)}{M}\right)^j\right) - \frac{\mu(W_i)}{M}\right).
        \]
        Let us now study the limit $\lim_{i \to \infty} \mu(W_i)$. Define the
        polynomial
        \begin{equation}
                \label{eq:polynomial_prodfree}
                f(x) = \Pr(\alpha \in \mathcal{G})\left(\frac{1}{k}
                \left(\sum_{j=0}^{k-1} \left(\frac{x}{M}\right)^j\right) -
        \frac{x}{M}\right),
\end{equation}
        and the sequence $x_i = \mu(W_i)$, so that we have $x_{i+1} - x_i \geq
        f(x_i)$. Since the sequence $x_i$ is increasing and bounded, it has a
        limit $r = \lim_{i \to \infty} x_i$, which, by continuity of $f$, must
        satisfy $0 = r - r \geq f(r)$. For $2 \leq k \leq 3$, the polynomial $f$ is
        positive at zero and has its first positive zero at $M$. Hence, for $2
        \leq k \leq 3$ we obtain that $r = \lim_{i \to \infty} \mu(W_i) \geq M$.

        Applying Lemma \ref{lem:unique_implies_bounded} with every $W_i$
        and letting $i \to \infty$ we obtain $\bar{d}_{\mathcal{G}}(S) \leq
        1/k$, a contradiction with our original assumption.
\end{proof}

The previous proof fails for $k > 3$ because the polynomial in
\eqref{eq:polynomial_prodfree} has a zero in the interval $(0,M)$. Thus, we
must find another way of building the subset $W \subseteq S$ necessary to apply Lemma
\ref{lem:unique_implies_bounded}. To do so, instead of finding such a set
directly in $\mathcal{G}$, we will find a subset of $\mathcal{G}$
where $S$ is regularly distributed, where the existence of $W \subseteq S$ as we
are interested in is much easier to prove and does not depend on $S$ being
product-free.

Concretely, we are interested in studying $S$ when we restrict ourselves to
words which are divisible in $\mathcal{G}$ by a given factor. For a given $w
\in \mathcal{G}$, we write $w\mathcal{G} \subseteq \mathcal{G}$ for the set
of words belonging to $\mathcal{G}$ which may be written as $w \alpha$ for
$\alpha \in \mathcal{G}$. We then
define the following {pseudorandomness} condition, which measures whether
$S$ is evenly distributed when restricted to such sets.
\begin{definition}
        Given $w \in \mathcal{G}$, a subset $S \subseteq w\mathcal{G}$ is
        $\varepsilon$-regular in $w\mathcal{G}$ if
        \begin{equation}
                \label{eq:pseudorandomness}
                \bar{d}_{ww'\mathcal{G}}((S \cap (ww'\mathcal{G}))  \leq
                \bar{d}_{w\mathcal{G}}(S) +
                \varepsilon
        \end{equation}
        for all words $w' \in \mathcal{G}$.
\end{definition}

We now prove the analogous statement to Theorem
\ref{theo:kfree} under pseudorandomness assumptions. In particular, the
following lemma will imply Theorem \ref{theo:kfree} when $S$ is $\varepsilon$-regular for all
$\varepsilon > 0$.
\begin{lemma}
        \label{lemma:kprod_pseudorandom}
        Let $S \subseteq w\mathcal{G}$ be a strongly $k$-product free set that is
        $\varepsilon$-regular in $w\mathcal{G}$, with $w \in \mathcal{G}$, and
        let $d = \bar{d}_{w\mathcal{G}}(S)$ be its relative upper density. Then
        \begin{equation}
\label{eq:kprod_pseudorandom}
                d \left(1 + \frac{d}{d+2\varepsilon} + \cdots +
                \left(\frac{d}{d+2\varepsilon}\right)^{k-1} \right) \leq 1,
        \end{equation}
\end{lemma}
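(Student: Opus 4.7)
The plan is to apply Lemma \ref{lem:unique_implies_bounded} with the dense subsemigroup $\mathcal{H} = w\mathcal{G}$ (it is dense since $\mu(w\mathcal{G}(\ell))$ stabilises to a positive constant $c_w$ for large $\ell$) to a finite $W \subseteq S$ with unique products in $w\mathcal{G}$ and $\mu(W) \geq Md/(d+2\varepsilon)$. The lemma then yields $d = \bar d_{w\mathcal{G}}(S) \leq 1/\sum_{j=0}^{k-1}(\mu(W)/M)^j \leq 1/\sum_{j=0}^{k-1}(d/(d+2\varepsilon))^j$, which rearranges to exactly \eqref{eq:kprod_pseudorandom}.

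I would build such a $W$ by a greedy iteration, in the spirit of the $k \leq 3$ proof but carried out inside $w\mathcal{G}$ and using the $\varepsilon$-regularity (rather than product-freeness) to bound how much of $S$ is absorbed at each step. Initialise $W_1 = S(\ell_0)$ for some $\ell_0$ with $S(\ell_0)\neq \varnothing$; a single-layer set has unique products in $w\mathcal{G}$ trivially. Given $W_i$ with unique products in $w\mathcal{G}$, choose $\ell_i > \max_{u\in W_i}|u| + |w|$ and set $W_{i+1} = W_i \sqcup (S(\ell_i)\setminus W_i\cdot w\mathcal{G})$.

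For the growth estimate, observe that each $u \in W_i$ can be written $u = w\alpha'$ with $\alpha'\in \mathcal{G}$, and hence $u\cdot w\mathcal{G}$ coincides with $ww'\mathcal{G}$ for $w' = \alpha' w \in \mathcal{G}$. Applying the $\varepsilon$-regularity hypothesis at each such $w'$, summing over $u \in W_i$ (disjoint by unique products), and invoking the computation \eqref{eq:product_WA} from the proof of Lemma \ref{lem:unique_implies_bounded}, one obtains $\mu(S \cap W_i\cdot w\mathcal{G}_{\leq n}) \leq (d+\varepsilon)\mu(W_i)\mu(w\mathcal{G}_{\leq n})/M + o(n)$. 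Subtracting from $\mu(S_{\leq n}) \geq (d-o(1))\mu(w\mathcal{G}_{\leq n})$ on a sequence of $n$ realising the limsup, and averaging over layers $\ell > \max|u|+|w|$, some such $\ell_i$ must satisfy $\mu(W_{i+1}) - \mu(W_i) \geq c_w\bigl(d - (d+\varepsilon)\mu(W_i)/M\bigr) - o(1)$. This recursion stabilises at $Md/(d+\varepsilon) > Md/(d+2\varepsilon)$, so finitely many iterations produce a $W$ of the required measure.

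The main subtlety I expect is the verification of unique products in $w\mathcal{G}$ while excluding only $W_i \cdot w\mathcal{G}$. A potential divisor $u_1 t = u_2$ with $t$ a proper prefix of $w$ ending in $y$ lies in $\mathcal{G}\setminus w\mathcal{G}$ and so slips past the exclusion; the length choice $\ell_i > \max|u|+|w|$ neutralises this, forcing $|t| \geq |w|$ in any cross-step would-be divisor, which then forces $t$ into $w\mathcal{G}$ and hence into the excluded set. Relying instead on divisor-freeness in the larger $\mathcal{G}$ would bypass this check but oblige the recursion to exclude $W_i\cdot\mathcal{G}$, replacing $c_w$ above by $c_0 = c_w(2|\mathcal{A}|-1)^{|w|}$ and degrading the stable value of $\mu(W)$ by the same factor, yielding only a trivial bound once $|w|\geq 1$.
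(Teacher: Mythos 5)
Your proposal is correct and follows essentially the same route as the paper's own proof: iteratively build $W_{i+1} = W_i \sqcup (S(\ell_i)\setminus W_i\cdot w\mathcal{G})$ with $\ell_i$ exceeding $\max_{u\in W_i}|u|+|w|$, bound the absorbed mass via the $\varepsilon$-regularity applied to each $ww'\mathcal{G}$ with $w'=\alpha'w$, derive the linear recursion, and feed the resulting $W$ into Lemma~\ref{lem:unique_implies_bounded}. The only cosmetic differences are that the paper records the induction invariant as two explicit conditions (lengths equal or differing by more than $|w|$, together with divisor-freeness in $w\mathcal{G}$) rather than as unique products directly, and it cleans up the $o(1)$ error by absorbing it into a $2\varepsilon$ via an auxiliary parameter $\xi$ instead of the ``finitely many steps suffice to cross the threshold'' phrasing you use; both are equivalent in substance, and your discussion of why one must exclude $W_i\cdot w\mathcal{G}$ rather than $W_i\cdot\mathcal{G}$ accurately identifies the key subtlety the length condition is there to handle.
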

\begin{proof}
        Our goal is to build a subset $W \subseteq S$ with unique
        products in $w\mathcal{G}$ and $\mu(W)$ as large as possible so that we may apply Lemma
        \ref{lem:unique_implies_bounded} with $\mathcal{H} = w\mathcal{G}$. We
        will construct a nested sequence of subsets $W_i \subseteq S$
        with the following properties:
        \begin{enumerate}[i)]
                \item Elements of $W_i$ have either the same length or lengths
                        that differ by at least $|w|$. That is, given $w_1, w_2
                        \in W_i$, either $|w_1| = |w_2|$ or $\big||w_1| -
                        |w_2|\big| >
                        |w|$.
                \item The sets $W_i$ are divisor-free in $w\mathcal{G}$. By this we
                        mean that there are no $u, v \in W_i$ and $t \in w\mathcal{G}$
                        such that $ut = v$.
        \end{enumerate}
        From these two properties, we may deduce that the sets $W_i$ have unique
        products in $w\mathcal{G}$. Indeed, if $ux = u'x'$ for some $u, u' \in
        W_i$ and $x, x' \in w\mathcal{G}$, then $|u| = |u'|$ implies $u=u'$ and
        $x=x'$, and it cannot be that $|u| < |u'| - |w|$ because
        $W_i$ is divisor-free.
        Set $W_1 = S(\ell)$ for some $\ell$ such that $S(\ell) \neq \varnothing$.
        Given $W_i$, we will now build $W_{i+1}$. For a given $n$,
        choose $\alpha$ randomly as in the proof of Lemma
        \ref{lem:unique_implies_bounded}.
        We have that
        \begin{equation}
\label{eq:lower_bound_wk}
                \Pr(\alpha \in S,\, \alpha \not \in W_i \cdot w\mathcal{G})
                \geq \Pr(\alpha \in S) - \Pr(\alpha \in S, \alpha \in W_i \cdot
                w\mathcal{G}).
        \end{equation}
        Let us upper bound the rightmost term. Since $W_i$ has unique products,
        \begin{align*}
                \Pr(\alpha \in S, \alpha \in W_i \cdot
                w\mathcal{G}) &= \sum_{wu \in W_i} \Pr(\alpha \in S, \alpha \in
                wuw \mathcal{G}) \\
                &= \sum_{wu \in W_i} \frac{\mu((wuw\mathcal{G})_{\leq
                n})}{\mu(\mathcal{F}_{\leq n})}  \frac{\mu((S \cap
                (wuw \mathcal{G}))_{\leq n})}{\mu((wuw\mathcal{G})_{\leq
                n})}.
        \end{align*}
        Consider a fixed $\varepsilon > 0$.
        By the $\varepsilon$-regularity of $S$, using
        \eqref{eq:pseudorandomness} we may guarantee for all large enough $n$
        that $\frac{\mu((S \cap (wuw\mathcal{G}))_{\leq
        n})}{\mu((wuw\mathcal{G})_{\leq n})} \leq d + \varepsilon$, from which we
        conclude that
        \[
                \Pr(\alpha \in S, \alpha \in W_i \cdot w\mathcal{G}) \leq
                \Pr(\alpha \in W_i \cdot w \mathcal{G}) (d + \varepsilon).
        \]
        Using the estimate \eqref{eq:product_WA} we obtain
        \[
                \Pr(\alpha \in S, \alpha \in W_i \cdot w\mathcal{G}) \leq \frac{\mu(W_i)
                \Pr(\alpha \in w \mathcal{G})}{M}(d + \varepsilon) + o(1)
        \]

        Plugging this into \eqref{eq:lower_bound_wk}, for any $\xi > 0$ we may
        obtain arbitrarily large $n$ such that
        \begin{align*}
                \Pr(\alpha \in S,\, \alpha \not \in W_i \cdot
                w\mathcal{G})
        &\geq\Pr(\alpha \in w\mathcal{G}) \left(d - \xi - \frac{\mu(W_i)}{M}
        (d + \varepsilon)\right).
        \end{align*}
        Setting $\xi = \varepsilon \mu(W_i) / M$ and using the fact that
        $\Pr(\alpha \in A) = \frac{1}{n}\sum_{i=1}^n \mu(A(i))$, we may obtain
        $\ell \in \mathbb{N}$ as large as necessary such that
        \[
                \mu(S(\ell) \setminus W_i \cdot w\mathcal{G}) \geq \Pr(\alpha \in
                w\mathcal{G}) \left(d -
                \frac{\mu(W_i)}{M}(d+2\varepsilon)\right).
        \]
        Taking $\ell$ larger than the maximum length of an element in
        $W_i$ plus $|w|$ and setting $W_{i+1} = W_i \sqcup (S(\ell) \setminus W_i
        \cdot w\mathcal{G})$ we obtain a subset $W_{i+1} \subseteq S$ with the desired
        properties.

        This gives us a nested sequence of subsets $W_i$ satisfying
        \[
                \mu(W_{i+1}) \geq \mu(W_i) + \Pr(\alpha \in w\mathcal{G})\left(d -
                \frac{\mu(W_i)}{M}(d+2\varepsilon)\right).
        \]
        We analyse the limit $\lim_{i \to \infty} \mu(W_i)$ as we did in the
        proof of Proposition \ref{prop:kfree_xy}.
        Define $x_i = \mu(W_i)$ and the linear function
        \[
                f(x) = \Pr(\alpha \in w\mathcal{G})\left(d -
                \frac{x}{M}(d+2\varepsilon)\right),
        \]
        so that we have $x_{i+1} - x_i \geq
        f(x_i)$. Since the sequence $x_i$ is increasing and bounded, it has a
        limit $r = \lim_{i \to \infty} x_i$, which, by continuity of $f$, must
        satisfy $0 = r - r \geq f(r)$. Since $f$ is positive at zero and its
        first positive zero is at $x = \frac{Md}{d+2\varepsilon}$, we obtain that
        $r = \lim_{i \to \infty} \mu(W_i) \geq \frac{Md}{d+2\varepsilon}$.
        Applying Lemma \ref{lem:unique_implies_bounded} with every $W_i$ and
        letting $i \to \infty$ gives the desired conclusion.
\end{proof}

Finally, we use a density-increment strategy, where failure of
pseudorandomness implies an increase in density, to prove our main result.
\begin{proof}[Proof of Proposition \ref{prop:kfree_xy}]
        Consider a fixed $\varepsilon > 0$. If $S$ is not $\varepsilon$-regular
        in $\mathcal{G}$,
        we may then find $w_1 \in \mathcal{G}$ such that
        \[
                \bar{d}_{w_1\mathcal{G}}(S \cap w_1\mathcal{G}) >
                \bar{d}_{\mathcal{G}}(S) +
                \varepsilon.
        \]
        More generally, if $S \cap w_k\mathcal{G}$ is not $\varepsilon$-regular in
        $w_k\mathcal{G}$, we may find $w_{k+1} \in w_k \mathcal{G}$ such that
        \[
                \bar{d}_{w_{k+1}\mathcal{G}}(S \cap w_{k+1}\mathcal{G}) >
                \bar{d}_{w_k \mathcal{G}}(S \cap w_k \mathcal{G}) +
                \varepsilon > \bar{d}_{\mathcal{G}}(S) +
                k\varepsilon.
        \]
        Since relative upper density is bounded above by $1$, this procedure
        must terminate in less than $\varepsilon^{-1}$ steps, which means there
        exists $w$ such that $S \cap w\mathcal{G}$ is $\varepsilon$-regular in
        $w\mathcal{G}$ and $\bar{d}_{w\mathcal{G}}(S \cap w\mathcal{G}) \geq
        \bar{d}(S)$.

        Applying Lemma \ref{lemma:kprod_pseudorandom} with $S \cap
        w\mathcal{G}$ and using that the left
        hand side in \eqref{eq:kprod_pseudorandom} is increasing in $d$ we obtain that
        \[
                \bar{d}(S) \left(1 + \frac{\bar{d}(S)}{\bar{d}(S)+2\varepsilon} + \cdots +
                \left(\frac{\bar{d}(S)}{\bar{d}(S)+2\varepsilon}\right)^{k-1} \right) \leq 1.
        \]
        Taking $\varepsilon \to 0$ finishes the proof.
\end{proof}

\section{Final remarks}\label{sec:final}
We devoted the previous section to proving Proposition \ref{prop:kfree_xy},
which is stated for the semigroup $\mathcal{F}^{xy}$ for $x, y \in \mathcal{A}
\cup \mathcal{A}^{-1}$ and $x \neq y^{-1}$. 
%The analogous statement also holds
%in the free semigroup, which actually was the model where we first worked out
%the proof, and is a generalisation of the main theorem in \cite{2020.LLNW}.
%\begin{theorem}
%        For any finite alphabet $\mathcal{A}$, a strongly $k$-product-free of
%        the free semigroup over $\mathcal{A}$ has upper density at most $1/k$.
%\end{theorem}
In fact, the proof of Theorem \ref{thm:semigroup} is done following the arguments from the previous section, by only replacing
the role played by $\mathcal{F}^{xy}$ for $\bm{F}_{\mathcal{A}}$, the free
semigroup over $\mathcal{A}$, and by replacing the constant $M$ by 1. It is also worth noting
that the results in \cite{2020.LLNW} concern the upper Banach density of product-free subsets, which gives slightly stronger results,
since the upper Banach density is an upper bound for the upper asymptotic density we
consider. For the sake of simplicity, we have not attempted to write down our
results for this case, although all arguments should hold.

Finally, it would also be interesting to consider the case of $k$-product-free
sets. To state the natural conjecture for this case, define $\rho$ as
\[
        \rho(k) = \min\left( \left\{ \ell \in \mathbb{N} \colon \ell \nmid k-1
        \right\} \right).
\]
Then we believe the following to be true.
\begin{conjecture}
        \label{conj:kprod}
        Let $S \subseteq \mathcal{F}$ be a $k$-product-free subset for
        $k \geq 2$. Then
        \begin{equation}
                \bar{d}(S) \leq \frac{1}{\rho(k)}
        \end{equation}
\end{conjecture}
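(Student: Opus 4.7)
I would mimic the strategy of the proof of Theorem \ref{theo:kfree}: first reduce to the subsemigroup $\mathcal{F}^{xy}$ via cyclic reduction exactly as in Section \ref{sec:reduction}, and then apply the density-increment argument from the proof of Proposition \ref{prop:kfree_xy} to pass to a $w\mathcal{G}$ where $S\cap w\mathcal{G}$ is $\varepsilon$-regular. The construction of a large $W\subseteq S$ with unique products in $w\mathcal{G}$ (the sequence $W_i$ built in the proof of Lemma \ref{lemma:kprod_pseudorandom}) uses only pseudorandomness and not any product-free hypothesis, so it still produces $W$ with $\mu(W)$ arbitrarily close to $Md/(d+2\varepsilon)$, tending to $M$ as $\varepsilon\to 0$.

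The substantive step is to extract enough disjoint iterates $W^j\cdot S$ from the single hypothesis $S^k\cap S=\varnothing$. Directly one obtains only $W^{k-1}\cdot S\cap S=\varnothing$, since $W^{k-1}\cdot S\subseteq S^k$; plugging this into Lemma \ref{lem:unique_implies_bounded} gives just the two disjoint sets $S$ and $W^{k-1}\cdot S$, yielding $d\,(1+r^{k-1})\leq 1$ and hence $\bar{d}(S)\leq 1/2$ after $r\to 1$. This already matches the conjectured bound whenever $k$ is even, since then $\rho(k)=2$, but falls short for odd $k$, where $\rho(k)\geq 3$.

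The main obstacle is to upgrade this to $\rho(k)$ pairwise disjoint iterates $W^{j_0}\cdot S,\dots,W^{j_{\rho(k)-1}}\cdot S$: by unique products one would need $W^{|j_a-j_b|}\cdot S\cap S=\varnothing$ for every $a\neq b$, and this is precisely what encodes the $\rho(k)$-periodicity enjoyed by the extremal construction in which $S$ consists of reduced words whose signed count of $x$ is $\equiv 1\pmod{\rho(k)}$. Note in particular that chaining the single identity $W^{k-1}\cdot S\cap S=\varnothing$ does not even yield $W^{2(k-1)}\cdot S\cap S=\varnothing$, so the naive iteration produces no new disjointness. A plausible route is a dichotomy: if $S$ is structurally close to the extremal set for some letter $x$, then one reads off the hidden $\rho(k)$-fold arithmetic structure directly; otherwise, one would hope to locate, via an averaging or counting argument over short subproducts of $S$, an additional $\ell$ with $W^\ell\cdot S\cap S=\varnothing$ beyond $\ell=k-1$, and eventually enough such $\ell$ to force the $\rho(k)$-periodicity. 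Making either branch quantitative appears to require a genuine structural or stability result for $k$-product-free sets in the free group that is not presently available, and this is where I expect the essential difficulty to lie.
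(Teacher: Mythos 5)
The statement you were asked about is Conjecture~\ref{conj:kprod}, which the paper poses as an open problem and does not prove; there is therefore no proof in the paper to compare your proposal against. (The ``Note added'' in Section~\ref{sec:final} records that the conjecture was subsequently resolved by Illingworth, Michel and Scott~\cite{2023.IMS}, but that argument is not contained in the present paper.) Your proposal is, correctly and candidly, not a proof either, and the point at which you say the essential difficulty lies is the right one.

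Your intermediate observations are accurate. From $S^k\cap S=\varnothing$ alone, unique products give only $W^{k-1}\cdot S\cap S=\varnothing$, so Lemma~\ref{lem:unique_implies_bounded} degrades to the two-term bound $d\bigl(1+(\mu(W)/M)^{k-1}\bigr)\le 1$; combined with the $W$-construction of Lemma~\ref{lemma:kprod_pseudorandom} and the density increment, this does yield $\bar d(S)\le 1/2$ for every $k$-product-free $S$, matching $1/\rho(k)$ exactly when $k$ is even and falling short when $k$ is odd. You are also right that the single relation $W^{k-1}\cdot S\cap S=\varnothing$ does not self-iterate to $W^{2(k-1)}\cdot S\cap S=\varnothing$, so no further disjoint layers come for free. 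Where your sketch and the paper's discussion diverge is in the proposed way forward: the authors outline a \L uczak--Schoen-style decomposition (for $k=3$, split $S$ into $S_2=\{w\in S:w=uv,\ u,v\in S\}$ and $S_1=S\setminus S_2$, noting $(S_2\cdot S)\cap S=\varnothing$ and that $S_1$ is strongly $3$-product-free), and then explain that their pseudorandomness notion only controls upper densities and so cannot regularise $S$ and $S_2$ simultaneously, while stronger pseudorandomness breaks the density increment. You instead propose a ``near-extremal vs.\ find extra disjoint powers $W^\ell\cdot S\cap S=\varnothing$'' dichotomy. Both routes land on the same diagnosis --- a genuine structural or stability input is needed to recover the $\rho(k)$-periodicity --- so your assessment is consistent with the authors' own, just arrived at via a different tentative decomposition.
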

This is analogous to a result of \L uczak and Schoen \cite{1997.LS}, which
proves the corresponding statement over the integers. One might attempt to prove
it using ideas by the same authors in \cite{2001.LS}. For example, for $k=3$,
one would partition $S$ into $S_1$ and $S_2$, with $w \in S_2$ if $w = uv$ for
$u, v \in S$ and $w \in S_1$ otherwise. Then one could use the following
disjunctive for an extremal $S$:
\begin{itemize}
        \item If $\bar{d}(S_2) > 0$, we would expect to find a large enough
                subset $W \subseteq S_2$ with unique products. One could then
                directly use $W$ as done in Lemma
                \ref{lem:unique_implies_bounded} because $(S_2 \cdot S) \cap S =
                \varnothing$.
        \item If $\bar{d}(S_2) = 0$, then $\bar{d}(S_1) = \bar{d}(S)$ and $S_1$
                is strongly $3$-product-free.
\end{itemize}
This approach fails because of the claim in the first point. It is not clear how
to find such a $W$. If we try to use the same approach as we did in this paper,
the control implied by our definition of pseudorandomness only allows for upper
bounds and not lower bounds on the density of $S$, and this complicates
controlling the regularity of $S$ and $S_2$ at the same time, which is
what we would need to find a suitable $W$. For stronger notions of
pseudorandomness, the density increment strategy seems to break.

Note added: \emph{After the preparation of our paper and the
        publication of our first preprint, Illingworth, Michel and Scott
        \cite{2023.IMS} proved Conjecture
        \ref{conj:kprod}. In their paper, they also prove
further results which characterise extremal product-free sets in the free
semigroup, solving a
conjecture from \cite{2020.LLNW}.}

% reuse?
% Thus, we have seen that product-free subsets of $\mathcal{F}^{xy}$ cannot have
% upper density larger than $1/2$ if $x \neq y^{-1}$. Hence, we would expect that
% product-free subsets of $\mathcal{F}$ with high upper density are, in some
% sense, concentrated on subgroups of the form $\mathcal{F}^{x x^{-1}}$. However,
% $S \cap \mathcal{F}^{x x^{-1}}$ is product-free if and only if it is
% product-free after removing both endpoints from all its members. We may then
% iterate this argument and conclude that the high density of $S$ comes only from
% elements of the kind $w \alpha w^{-1}$ for $w$ with large length, which is
% impossible. We formalise this idea in the following proof.
\section*{Acknowledgments}

We acknowledge the support of the grants MTM2017-82166-P, PID2020-113082GB-I00 funded by MICIU/AEI/10.13039/501100011033,
and the Severo Ochoa and Mar\'ia de Maeztu Program for Centers and Units of
Excellence in R\&D (CEX2020-001084-M). Miquel Ortega also acknowlegdes the
support of the FPI grant PRE2021-099120.

\bibliographystyle{chicago}
\bibliography{kproductfree}
% \printbibliography
\end{document}